\documentclass[12pt,twoside,a4paper]{article}

\usepackage{a4,enumerate}
\usepackage[noadjust]{cite}
\usepackage{amssymb,amsmath,amsthm}
\usepackage{comment}
\usepackage{accents,calc}
\usepackage{xcolor}

\newcommand\mytitle{On the existence of distributional potentials} 
\newcommand\lhead{J. Voigt}
\newcommand\rhead{Existence of distributional potentials}
\pagestyle{myheadings}
\markboth{\small \sf \lhead}{\small \sf \rhead}

\swapnumbers
\numberwithin{equation}{section}

\newtheorem{theorem}{Theorem}[section]
\newtheorem{corollary}[theorem]{Corollary}
\newtheorem{proposition}[theorem]{Proposition}
\newtheorem{lemma}[theorem]{Lemma}
\theoremstyle{definition}

\newtheorem{remark}[theorem]{Remark}

 \mathchardef\ordinarycolon\mathcode`\:
  \mathcode`\:=\string"8000
  \begingroup \catcode`\:=\active
    \gdef:{\mathrel{\mathop\ordinarycolon}}
  \endgroup


\def\bigscpr(#1,#2){{\left(#1\nonscript \mskip2mu plus2mu \middle| \nonscript \mskip2mu
plus2mu#2\right)}}

\newcommand\ran{R}
\renewcommand\ran{\operatorname{\rm ran}}
\newcommand*\indic{\mathbf{1}}

\newcommand\la{\lambda}

\newcommand*\bogi{\u\i}
\renewcommand\tilde{\widetilde}
\renewcommand\hat{\widehat}

\newcommand{\spt}{\operatorname{spt}}

\newcommand\grad{\operatorname{grad}}

\renewcommand*\div{\operatorname{div}}
\newcommand*\rot{\operatorname{rot}}

\renewcommand\phi{\varphi}

\renewcommand\epsilon{\varepsilon}

\newcommand*\Hone{H^1}
\newcommand*\Honen{\Hone_0}
\newcommand*\Honens{\Hone_{0,\sigma}}
\newcommand*\Honesn{\Hone_{\sigma,0}}
\newcommand*\polar{{\setlength{\unitlength}{.1em}%
\begin{picture}(4.4,5)%
\put(2.2,2.8){\circle{3.1}}%
\end{picture}}}
\renewcommand*\polar{{\textstyle\circ}}

\newcommand{\R}{\mathbb{R}\nonscript\hskip.03em}
\newcommand{\N}{\mathbb{N}\nonscript\hskip.03em}

\newcommand{\K}{\mathbb{K}\nonscript\hskip.03em}

\newcommand\cD{\mathcal D}

\newcommand\dupa[2]{\left\langle #1, #2 \right\rangle}
\newcommand\dup[2]{\langle #1, #2 \rangle}
\newcommand\bdup[2]{\bigl\langle #1, #2 \bigr\rangle}


 
\newcommand\textint{{\textstyle\int}}

\def\formE(#1,#2){\sum_{e\in E}\int_{a_e}^{b_e} #1_e'(x)\ol{#2_e'(x)}\,dx}

\makeatletter
\let\qedhere@ams\qedhere
\def\qedhere{\@ifnextchar[{\@qedhere}{\qedhere@ams}}
\def\@qedhere[#1]{\tag*{\raisebox{-#1ex}{\qedhere@ams}}}
\makeatletter
\def\env@cases{%
  \let\@ifnextchar\new@ifnextchar
  \left\lbrace
  \def\arraystretch{1.1}%
  \array{@{\,}l@{\quad}l@{}}%
}
\renewcommand\section{\@startsection {section}{1}{\z@}%
                                     {-3.25ex \@plus -1ex \@minus -.2ex}%
                                     {1.5ex \@plus.2ex}%
                                     {\normalfont\large\bfseries}}
\makeatother

\newcommand\restrict{\vphantom f\mskip1mu\vrule\mskip2mu}
\newcommand\set[2]{\bigl\{#1{;}\;#2\bigr\}}

\newcommand\ol{\overline}

\newcommand\pd{\partial}
\newcommand\bsp{\mkern-5.5mu}

\newcommand\<{\mkern-1.5mu}



\newcommand\comp{{\textnormal c}}
\newcommand\Cci{{\displaystyle 
C_{\raise0.2ex\hbox{$\scriptstyle\comp$}}^\infty}}

\renewcommand\le{\leqslant}

\renewcommand\ge{\geqslant}

\newcommand\sse{\subseteq}

\newcommand\di{\mathclose{}\,\mathrm{d}}





\newcommand\slim{\mathop{\rm s\kern.08em\mbox{\rm -}lim}} 

\newcommand\abstracttext{\noindent
We present proofs for the existence of distributional potentials $F\in\cD'(\Omega)$ for distributional
vector fields $G\in\cD'(\Omega)^n$, i.e.~$\grad F=G$, where $\Omega$ is an open subset of $\R^n$. The hypothesis in these proofs is the compatibility condition $\partial_jG_k=\partial_kG_j$ 
for all $j,k\in\{1,\dots,n\}$, if $\Omega$ is simply connected, and a stronger condition in the general case.
A key tool in our treatment is 
the Bogovski{\bogi} formula, assigning vector fields $v\in\cD(\Omega)^n$ satisfying $\div v=\phi$ to functions $\phi\in\cD(\Omega)$ with $\int \phi(x)\di x=0$. 
The results are applied to properties of Hilbert spaces of functions occurring in the treatment of  the Stokes operator and the Navier--Stokes equations.
\vspace{8pt}

\noindent
MSC 2010: 46F10, 46E35
\vspace{2pt}

\noindent
Keywords: Distribution, Poincar\'e's lemma, de Rham's theorem, Bogovski{\bogi} formula, Stokes operator
}
\hsize=126mm
\vsize=180mm
\parindent=5mm

\begin{document}
\title{\mytitle}

\author{J\"urgen Voigt}

\date{}

\maketitle

\begin{abstract}
\abstracttext
\end{abstract}

\section*{Introduction}
\label{intro}

The most elementary version of Poincar\'e's lemma is the statement that, given a $C^1$-vector field $v\colon\R^n\to\K^n$ satisfying $\pd_jv_k=\pd_kv_j$ for all
$j,k=1,\dots,n$, there exists a potential $p\in C^2(\R^n)$ for $v$, i.e.~$\grad p=v$. The main issue of this paper is to present proofs of
the following two distributional versions of this kind of existence theorem.

\begin{theorem}\label{thm-derham} (`de Rham style')
Let $\Omega\sse\R^n$ be open and connected. Let $G=(G_1,\ldots,G_n)\in\cD'(\Omega)^n$, and suppose that
\begin{equation}\label{hyp H'}
\langle G,\phi\rangle=0\qquad (\phi\in\cD(\Omega)^n,\ \div \phi=0).
\end{equation}
Then there exists a distribution $F\in\cD'(\Omega)$ such that $\grad F=G$. If $n\ge2$ and $G$ has compact support, then $F$ can be chosen with compact support.
\end{theorem}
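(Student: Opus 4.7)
\noindent\emph{Proof plan.}
The strategy is to invert the divergence via a Bogovski{\bogi} operator and recover $F$ from $G$ by duality. I take as a prior lemma the existence of a sequentially continuous linear operator $B\colon\cD_0(\Omega)\to\cD(\Omega)^n$ with $\div\circ B=\id$, where $\cD_0(\Omega):=\set{\phi\in\cD(\Omega)}{\int_\Omega\phi\,dx=0}$. Fix $\phi_0\in\cD(\Omega)$ with $\int_\Omega\phi_0\,dx=1$ and, for every $\phi\in\cD(\Omega)$, set
\[
\langle F,\phi\rangle:=-\bigl\langle G,B\bigl(\phi-\phi_0\textstyle\int_\Omega\phi\,dx\bigr)\bigr\rangle.
\]
Linearity is immediate, and the sequential continuity of $B$ together with that of $G$ yields $F\in\cD'(\Omega)$.

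To verify $\grad F=G$, fix $\phi\in\cD(\Omega)$ and $j\in\{1,\dots,n\}$. Since $\int\pd_j\phi\,dx=0$, the definition reduces to $\langle F,\pd_j\phi\rangle=-\langle G,B(\pd_j\phi)\rangle$. The key observation is that the test field $v:=B(\pd_j\phi)-\phi\+e_j\in\cD(\Omega)^n$ satisfies $\div v=\pd_j\phi-\pd_j\phi=0$, so by \eqref{hyp H'} we have $\langle G,v\rangle=0$, i.e.\ $\langle G,B(\pd_j\phi)\rangle=\langle G_j,\phi\rangle$. Combining yields $\langle\pd_jF,\phi\rangle=-\langle F,\pd_j\phi\rangle=\langle G_j,\phi\rangle$, as required.

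The main obstacle — to be handled by the preceding lemma — is the construction of $B$ on an arbitrary connected open $\Omega$. The classical Bogovski{\bogi} formula yields $B$ only on bounded star-shaped (or Lipschitz) pieces; globalization requires a locally finite cover of $\Omega$ by such pieces together with a connectedness-based mass-transport argument that decomposes any $\phi\in\cD_0(\Omega)$ into a finite sum of zero-integral test functions, each supported in a single piece, to which the local operators then apply.

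For the compact-support addendum, assume $n\ge2$ and $\spt G$ compact. Testing \eqref{hyp H'} against the divergence-free fields $(\pd_k\psi)e_j-(\pd_j\psi)e_k$ yields the compatibility identity $\pd_jG_k=\pd_kG_j$, which persists after zero-extension of $G$ to the simply connected $\R^n$. With $E$ the fundamental solution of $\Delta$, I take $\tilde F:=E*\div G$: each component of $\grad\tilde F-G$ is a harmonic distribution on $\R^n$, smooth by Weyl's lemma, decaying at infinity as a Newtonian derivative of a compactly supported distribution, and hence identically zero by Liouville. Since $\grad\tilde F=0$ outside $\spt G$ and $\tilde F\to0$ at infinity, $\tilde F$ is compactly supported in $\R^n$. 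The final step — placing $\spt\tilde F$ inside $\Omega$ — uses \eqref{hyp H'} again (strictly stronger than compatibility when $\Omega$ is not simply connected) to force the locally constant values of $\tilde F$ on those bounded components of $\R^n\sms\spt G$ which meet $\R^n\sms\Omega$ to vanish.
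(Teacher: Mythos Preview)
Your main existence argument coincides with the paper's in all essential respects: both invert the divergence via a Bogovski{\bogi} right inverse and recover $F$ by duality, exploiting the identity $\div\bigl(B(\pd_j\phi)-\phi\,e_j\bigr)=0$ together with hypothesis~\eqref{hyp H'}. The only organisational difference is that the paper works locally --- defining $F_k$ on each ball of a countable cover via its own Bogovski{\bogi} operator and then adjusting additive constants so that the $F_k$ agree on overlaps, the adjustment being carried out by precisely the chain-of-balls mass-transport mechanism you sketch --- whereas you absorb that patching into a preliminary lemma producing a single global operator $B\colon\cD_0(\Omega)\to\cD(\Omega)^n$. These two arrangements are equivalent repackagings of the same idea.

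For the compact-support addendum you take a genuinely different route (Newton potential plus Liouville) from the paper, which simply extends $G$ by a cutoff to $\R^n$, applies the existence result on $\R^n$, and subtracts the constant value on the unbounded component $\Omega_\infty$ of $\R^n\sms\spt G$. Both constructions yield a distribution on $\R^n$ supported in the compact set $\R^n\sms\Omega_\infty$. Your final step, however, contains a real gap: the claim that \eqref{hyp H'} forces $\tilde F$ to vanish on every bounded component of $\R^n\sms\spt G$ meeting $\R^n\sms\Omega$ is neither proved nor true. Take $\Omega=\{1<|x|<3\}\subset\R^2$ and $G=\grad\bigl(f(|\cdot|)\bigr)$ with $f\in C^\infty(\R)$, $f\equiv1$ on $(-\infty,\tfrac32]$, $f\equiv0$ on $[\tfrac52,\infty)$; then $G$ is compactly supported in $\Omega$ and satisfies \eqref{hyp H'}, yet $\tilde F=E*\Delta\bigl(f(|\cdot|)\bigr)=f(|\cdot|)$ equals $1$ on the bounded component $\{|x|<\tfrac32\}$. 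In fact no primitive of this $G$ on $\Omega$ has support compact \emph{in} $\Omega$ (the two boundary circles force incompatible constants), so the assertion you are trying to establish cannot hold in the strength you have read into it; the paper's proof stops at producing $F$ as the restriction to $\Omega$ of a distribution compactly supported in~$\R^n$, and does not address the step you attempt.
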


\begin{theorem}\label{thm-poincare} (`Poincar\'e style')
Let $\Omega\sse\R^n$ be open and simply connected. Let $G\in\cD'(\Omega)^n$ be such that $\pd_jG_k=\pd_kG_j$ for all $j,k\in\{1,\dots,n\}$. Then there exists $F\in\cD'(\Omega)$ such that $\grad F = G$.
\end{theorem}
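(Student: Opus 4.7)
The strategy is to deduce Theorem \ref{thm-poincare} from Theorem \ref{thm-derham} by verifying the integrability condition \eqref{hyp H'}. So I must show that $\dup G\phi=0$ for every $\phi\in\cD(\Omega)^n$ with $\div\phi=0$. The case $n=1$ is trivial: a simply connected open subset of $\R$ is an interval, and $\phi\in\cD(\Omega)$ with $\phi'=0$ must vanish identically, so the condition holds. Hence assume $n\ge2$.

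Given such $\phi$, I would produce a skew-symmetric family $(A_{jk})_{j,k=1,\ldots,n}\sse\cD(\Omega)$, with $A_{kj}=-A_{jk}$ and $\phi_k=\sum_{j}\pd_jA_{jk}$ for each $k$. Assuming $A$ is in hand, distributional integration by parts followed by a relabelling of summation indices gives
\begin{align*}
\dup G\phi &= \sum_{j,k}\dup{G_k}{\pd_jA_{jk}} = -\sum_{j,k}\dup{\pd_jG_k}{A_{jk}} = -\sum_{j,k}\dup{\pd_kG_j}{A_{jk}}\\
&= -\sum_{j,k}\dup{\pd_jG_k}{A_{kj}} = \sum_{j,k}\dup{\pd_jG_k}{A_{jk}},
\end{align*}
where the third equality uses the compatibility hypothesis $\pd_jG_k=\pd_kG_j$, the fourth relabels $j\leftrightarrow k$ in the double sum, and the fifth uses $A_{kj}=-A_{jk}$. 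The last expression equals $-\dup G\phi$ (reverse the integration by parts of the second equality), so $\dup G\phi=-\dup G\phi$, hence $\dup G\phi=0$.

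The main obstacle is the construction of the skew-symmetric potential $A$ with $\spt A_{jk}\sse\Omega$. On all of $\R^n$ one can take $u=-(-\Delta)^{-1}\phi$ and $A_{jk}=\pd_ju_k-\pd_ku_j$, which satisfies $\phi_k=\sum_j\pd_jA_{jk}$ (using that $\div u=0$, which follows from $\div\phi=0$ and decay of the Newton potential); but this $A$ has unbounded support. To localise the construction to a simply connected $\Omega$, I would invoke the Bogovski{\bogi} formula iteratively, using it to solve the required divergence equations inside $\Omega$ with compactly supported solutions. The delicate point is that at each step the right-hand side must satisfy the zero-mean condition needed by Bogovski{\bogi}; simple connectedness of $\Omega$ is what permits these successive compatibility conditions to be arranged within $\Omega$, and it is precisely the topological obstruction beyond which this strategy breaks down.
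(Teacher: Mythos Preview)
Your algebraic reduction is fine: once a skew-symmetric $A=(A_{jk})\in\cD(\Omega)^{n\times n}$ with $\phi_k=\sum_j\pd_jA_{jk}$ is available, the computation you give yields $\dup G\phi=0$, hypothesis~\eqref{hyp H'} is verified, and Theorem~\ref{thm-derham} finishes the job. This is a genuinely different route from the paper's proof, which does \emph{not} reduce Theorem~\ref{thm-poincare} to Theorem~\ref{thm-derham}; instead the paper constructs local potentials on strictly star-shaped pieces directly (via the transpose of the Bogovski{\bogi} operator, after approximating $G$ by smooth closed vector fields) and then makes the additive constants consistent across a covering by balls using a homotopy argument (Lemma~\ref{lem-homotopy}).

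The genuine gap in your proposal is the construction of $A$. What you are asserting is that every compactly supported divergence-free $\phi$ on a simply connected $\Omega$ admits a compactly supported skew potential in $\Omega$; equivalently, that every closed compactly supported $(n{-}1)$-form on $\Omega$ is the exterior derivative of a compactly supported $(n{-}2)$-form, i.e.\ $H^{n-1}_\comp(\Omega)=0$. This is true---Poincar\'e duality for the open manifold $\Omega$ gives $H^{n-1}_\comp(\Omega)\cong H_1(\Omega)$, which vanishes when $\pi_1(\Omega)=0$---but it is a theorem of comparable depth to the one you are trying to prove, and your sketch does not establish it. The phrase ``invoke the Bogovski{\bogi} formula iteratively'' is not a proof: the Bogovski{\bogi} operator solves $\div v=f$ for scalar $f$ of zero mean on a single star-shaped piece; it does not by itself produce a skew two-tensor, no iteration scheme is actually specified, and you have not explained concretely how simple connectedness enters to close the argument. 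If you want to pursue this route you must either cite Poincar\'e duality with compact supports or give a self-contained construction of $A$; as written, the proposal relocates the difficulty rather than resolving it.
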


Proofs of these theorems have been provided by S.\,Mardare \cite[Theorems~4.1 and~2.1]{Mardare2008}. In his paper he explains that ``to this day, there is no proof, to the best knowledge of the author''\!, of Theorem~\ref{thm-poincare} in the existing literature;  
and concerning Theorem~\ref{thm-derham}, the proof given in \cite[\S\,22, Theorem~17']{deRham1984} is said to require ``an important prerequisite about chains and flows on differential manifolds''\!. For more motivation and background we refer to \cite{Mardare2008}. A proof of the local part of Theorem~\ref{thm-poincare}, on the basis of de Rham's regularisation, is given in \cite[Corollary~3.6(i)]{Marsden1968}.

The author was intrigued by these theorems as well
as motivated by related problems connected with the Stokes operator, and this led to proofs of the theorems which are quite different from those presented in \cite{Marsden1968,Mardare2008}. 
Let us comment briefly on some differences between Mardare's and our proofs of Theorem~\ref{thm-poincare}, which is Theorem~2.1 in \cite{Mardare2008}. Given $G\in\cD'(\Omega)^n$ as in Theorem~\ref{thm-poincare}, Mardare first carries out the local part of the proof on sets $\omega=\prod_{j=1}^n(a_j,b_j)$ by presenting -- following Schwartz \cite[chap.~II, \S\,6]{Schwartz1966} -- a formula for the solution $F\in\cD'(\omega)$. Our approach is to deal with the local part on bounded open sets that are star-shaped with respect to an open ball and to approximate $G$ by $C^\infty$-vector fields satisfying the compatibility conditions. In both proofs, the global part is achieved by a homotopy argument; our proof is arranged in a way to permit the application of the divergence theorem on the parameter set $[0,1]^2$ of the homotopy.

The proofs of Theorems~\ref{thm-derham} and~\ref{thm-poincare} are the content of Sections~\ref{sec-derham} and~\ref{sec-poincare}, respectively.
In the remaining part of the paper we treat applications of the theorems stated above to Sobolev space versions of the theorems. In these results the task is finding potentials in $L_2(\Omega)$ for suitable vector fields in $H^{-1}(\Omega)^n$. Here, the author was motivated, amongst others, by the recent paper \cite{AmroucheCiarletMardare2015}. 

\section{Proof of de Rham's theorem}
\label{sec-derham}

In the proof of Theorem~\ref{thm-derham} we will need the Bogovski{\bogi} operator which we discuss next. Let $\Omega\sse\R^n$ be an 
open set, $\Omega$ star-shaped with respect to every point of an open ball $B(x^0,r_0)\sse\Omega$. Let $\rho\in\Cci(\R^n)_+:=\set{\phi\in\Cci(\Omega)}{\phi\geqslant0}$ with $\spt\rho\sse B(x^0,r_0)$ and $\int\rho(x)\di x=1$. For $\phi\in\cD(\Omega)$ we define $B\phi\in\cD(\Omega)^n$ by
\[
B\phi(x) := \int_\Omega\phi(y)(x-y)\int_1^\infty\rho(y+r(x-y))r^{n-1}\di r \di y\qquad(x\in\Omega).
\]
It is not too difficult to show that indeed $B\phi$ belongs to $C^\infty(\Omega)^n$, and
\begin{equation}\label{eq-support}
 \spt B\phi\sse\set{\la z_1+(1-\la)z_2}{z_1\in\spt\phi,\ z_2\in\spt\rho,\ 0\le\la\le1},
\end{equation}
a compact subset of $\Omega$.

\begin{remark}\label{rem-bogovskii}
The linear mapping $B\colon \cD(\Omega)\to\cD(\Omega)^n$ is the \textbf{Bogovski{\bogi} operator}; we refer to \cite{Bogovskii1979} for its first appearance. It has the important property that 
$\int_\Omega\phi(x)\di x=0$ implies that $\div\< B\phi=\phi$; see Remark~\ref{rem-explanation-Bogovskii} below. 
This property will also be used in the less explicit version that for any $\phi\in\cD(\Omega)$ with $\int\<\phi(x)\di x=0$ there exists a vector field $\Phi\in\cD(\Omega)^n$ with the property 
$\div\Phi=\phi$.
(The reader should be aware of the fact that the Bogovski{\bogi} operator depends on the function~$\rho$; so the use of the definite article `the' might be somewhat misleading.)
\end{remark}

\begin{lemma}\label{lem-bogcont}
The operator $B\colon \cD(\Omega)\to\cD(\Omega)^n$ is continuous with respect to the standard topologies. 
\end{lemma}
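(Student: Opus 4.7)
The plan is to exploit the fact that $\cD(\Omega)$ carries the strict inductive limit topology of the Fréchet spaces $\cD_K(\Omega)$ for $K \sse \Omega$ compact, so that continuity of the linear map $B$ reduces to showing, for each such $K$, that $B$ restricts to a continuous operator $\cD_K(\Omega)\to\cD_{K'}(\Omega)^n$ for a suitable compact $K' \sse \Omega$. Using \eqref{eq-support} together with the hypothesis that $\Omega$ is star-shaped with respect to every point of $B(x^0,r_0)$, I would take
\[
K' := \set{\la z_1 + (1-\la)z_2}{z_1\in K,\ z_2\in\spt\rho,\ \la\in[0,1]},
\]
which is compact, contained in $\Omega$, and contains $\spt B\phi$ for every $\phi\in\cD_K(\Omega)$. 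Since both $\cD_K(\Omega)$ and $\cD_{K'}(\Omega)^n$ are Fréchet, continuity amounts to seminorm estimates of the form $\sup_{x\in K'}|\pd^\alpha B\phi(x)| \le C_\alpha \sum_{|\gamma|\le|\alpha|}\sup_{y\in K}|\pd^\gamma \phi(y)|$ for every multi-index $\alpha$.

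The first substantive step is to recast the defining integral in a form suitable for differentiation under the integral sign. Successive substitutions $\xi = x-y$ and $t = r-1$, followed by a passage to polar coordinates $\xi = \tau\omega$ and the scaling $s = t\tau$, transform the definition (after a routine calculation) into
\[
B\phi(x) = -\int_{S^{n-1}}\omega\int_0^\infty\int_0^\infty \rho(x-t\omega)\,\phi(x+s\omega)\,(t+s)^{n-1}\di s\di t\di\omega,
\]
in which the entire $x$-dependence sits inside the smooth factors $\rho$ and $\phi$.

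Differentiation under the integral is now straightforward: applying $\pd^\alpha$ and Leibniz produces a sum over $\beta+\gamma=\alpha$ of terms in which $\rho$ and $\phi$ are replaced by $\pd^\beta\rho$ evaluated at $x-t\omega$ and $\pd^\gamma\phi$ evaluated at $x+s\omega$. The integrand vanishes unless $x-t\omega \in \spt\rho$ and $x+s\omega \in K$; for $x\in K'$ this constrains $s$ and $t$ to lie in bounded intervals whose lengths depend only on $K$, $\spt\rho$ and $K'$. Bounding the resulting integral by the sup-norms of the (finitely many relevant) derivatives of $\rho$ -- which are constants independent of $\phi$ -- and the sup-norms of $\pd^\gamma\phi$ over $y\in K$ yields the required seminorm estimate.

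The main obstacle I expect is not the bookkeeping above but the validity of the change of variables itself: in the original formula the inner integrand $\rho(y+r(x-y))\,r^{n-1}$ fails to be integrable at $y=x$, and only the vanishing prefactor $x-y$ makes $B\phi(x)$ well defined, so one cannot naively differentiate under the integral in that form. The passage to polar coordinates absorbs this singular cancellation into the spherical measure on $S^{n-1}$, after which all remaining estimates are genuinely routine.
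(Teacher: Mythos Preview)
Your proposal is correct and follows essentially the same idea as the paper's proof: rewrite $B\phi$ via a change of variables so that the $x$-dependence appears only inside the smooth factors $\phi$ and $\rho$, after which differentiation under the integral sign yields the required seminorm estimates, and combine this with the support property~\eqref{eq-support}. The paper stops at the intermediate form
\[
B\phi(x)=\int_{\R^n}\phi(x-z)\,\frac{z}{|z|^n}\int_0^\infty\rho\Bigl(x+s\frac{z}{|z|}\Bigr)(s+|z|)^{n-1}\di s\di z,
\]
which becomes your polar-coordinate formula upon writing $z=\tau\omega$; your version simply makes the passage to $S^{n-1}$ explicit and spells out the inductive-limit framework and the choice of~$K'$, which the paper leaves implicit.
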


\begin{proof}
The function $B\phi$ can be rewritten as
\[
B\phi(x)=\int_{z\in\R^n}\phi(x-z)\frac z{|z|^n}\int_0^\infty\rho\bigl(x+s\frac z{|z|}\bigr)(s+|z|)^{n-1}\di s\di z
\]
(where $\rho$ and $\phi$ are considered as functions in $\cD(\R^n)$). This shows that derivatives of $B\phi$ can be estimated by derivatives of $\phi$ and $\rho$ of the same and lower order. This fact together with the support property \eqref{eq-support} shows the assertion.
\end{proof}

\begin{proof}[Proof of Theorem~\ref{thm-derham}]
(i) In this part of the proof we suppose that $\Omega$ is star-shaped with respect to an open ball $B(x^0,r)\sse\Omega$. Let $\rho\in\Cci(\R^n)_+$, 
$\spt\rho\sse B(x^0,r)$ and $\int\rho(x)\di x=1$, and let $B\colon\cD(\Omega)\to\cD(\Omega)^n$ be the corresponding Bogovski{\bogi} operator. For 
$\phi\in\cD(\Omega)$ we define
\[
 \dup F\phi:= - \dup G{B\phi}.
\]
Then Lemma~\ref{lem-bogcont} implies that $F\in\cD'(\Omega)$.

In order to show that $\grad F=G$ let $\phi\in\cD(\Omega)$, $j\in\{1,\dots,n\}$. Then 
$\int_\Omega\pd_j\phi(x)\di x=0$, hence $\div(B\pd_j\phi-\phi e_j)=\pd_j\phi-\pd_j\phi =0$ (where $e_j$ denotes the $j$-th unit vector), and hypothesis \eqref{hyp H'} implies
\[
\dup {G_j} \phi =\dup G{\phi e_j}= \dup G {B\pd_j\phi} = - \dup F{\pd_j\phi} = \dup{\pd_jF}\phi.
\]
This shows that $G=\grad F$.

(ii) For the proof of the general case let $\bigl(B(x^k,r_k)\bigr)_{k\in N}$ be a countable covering of $\Omega$ by open balls contained in $\Omega$, with $1\in N\sse\N$. To ease notation we put $\Omega_k:=B(x^k,r_k)$ ($k\in N$). For each $k\in N$ let $\rho_k\in\Cci(\R^n)_+$ with $\spt\rho_k\sse \Omega_k$, 
$\int\rho_k(x)\di x=1$, and let $B_k\colon\cD(\Omega_k)\to\cD(\Omega_k)^n$ be the corresponding Bogovski{\bogi} operator. 

For $k\in N$ and some $c_k\in\K$ we define $F_k\in\cD'(\Omega_k)$ by
\[
 \langle F_k,\phi\rangle:=c_k\textint\phi(x)\di x - \dup G {B_k\phi}\qquad(\phi\in\cD(\Omega_k)).
\]
Then part (i) shows that $\grad F_k=G$ on $\Omega_k$. We are going to show that the constants $c_k$ can be chosen such that the family $(F_k)_{k\in N}$ of distributions is consistent.

For $k\in N$ we observe that there exists a function $\Phi_k\in\cD(\Omega)^n$ such that $\div\Phi_k=\rho_1-\rho_k$; this can be  seen by connecting $\Omega_1$ with $\Omega_k$ by a finite chain of consecutively intersecting open balls and applying Remark~\ref{rem-bogovskii} repeatedly. With this function we choose $c_k$ in such a way that $\dupa{F_k}{\rho_k}=\dupa G{\Phi_k}$; then
\begin{equation*}
 \dupa{F_k}\phi = \dupa{F_k}{\rho_k+\div B_k(\phi-\rho_k)}= \dupa G{\Phi_k-B_k(\phi-\rho_k)}
\end{equation*}
for all $\phi\in\cD(\Omega_k)$ with $\int\phi(x)\di x=1$.

In order to show that the family $(F_k)_{k\in N}$ is consistent, let $k,l\in N$ be such that $\Omega_k\cap \Omega_l\ne\varnothing$, and let $\phi\in \cD(\Omega)$ with $\spt\phi\sse \Omega_k\cap \Omega_l$,  $\int\phi(x)\di x=1$. Then
\[
 \dupa {F_k}\phi - \dupa {F_l}\phi =\dupa G{\Phi_k-B_k(\phi-\rho_k) - \Phi_l+B_l(\phi-\rho_l)}=0,
\]
because the divergence of the function to which $G$ is applied turns out to be $(\rho_1-\rho_k)-(\phi-\rho_k)-(\rho_1-\rho_l)+(\phi-\rho_l)=0$.
It follows that $\dupa {F_k}\phi = \dupa {F_l}\phi$ for all $\phi\in\cD(\Omega_k\cap\Omega_l)$.

We conclude that the family $(F_k)_{k\in N}$ composes to a distribution $F\in\cD'(\Omega)$ satisfying $\grad F=G$.

(iii) Now suppose that $n\ge2$ and that $G$ has compact support. There exists $\psi\in\cD(\Omega)$ with $\psi=1$ in a neighbourhood of the support of $G$. Then
\[
\dup{\widehat G}\phi:=\dup G{\psi\phi}\qquad(\phi\in\cD(\R^n)^n)
\]
defines an extension of $G$ to $\cD(\R^n)^n$, satisfying $\pd_j\widehat G_k=\pd_k\widehat G_j$ for all $j,k\in\{1,\dots,n\}$. According to part (ii) above, there exists $\widehat F\in\cD'(\R^n)$ such that $\grad\widehat F=\widehat G$. Denote by $\Omega_\infty$ the unbounded component of $\R^n\setminus\spt G$. Then $\grad \widehat F=0$ on the 
connected open set $\Omega_\infty$; hence there exists $c\in\K$ such that $\widehat F =c$
on $\Omega_\infty$. Then the restriction of $\widehat F -c$ to $\cD(\Omega)$ is as asserted.
\end{proof}

\section{Proof of the distributional version of Poincar\'e's lemma}
\label{sec-poincare}

We start with a property that will be needed in the proof of Theorem~\ref{thm-poincare}.

\begin{lemma}\label{lem-homotopy}
 Let $\Omega\sse\R^n$ be open. Let $v\in C^1(\Omega;\K^n)$ be a vector field satisfying
 \[
 \pd_jv_k=\pd_kv_j\qquad(j,k=1,\dots,n).
 \]
Let $\gamma,\tilde\gamma\colon[0,1]\to\Omega$ be $C^1$-paths with $\gamma(0)= \tilde\gamma(0)=:x^0$, $\gamma'(0)= \tilde\gamma'(0)=0$, $\gamma(1)= \tilde\gamma(1)=:x^1$, $\gamma'(1)= \tilde\gamma'(1)=0$.  
Suppose that $\Gamma\colon[0,1]^2\to\Omega$ is an FEP-homotopy between $\gamma$ and $\tilde\gamma$, i.e., $\Gamma$ is continuous, $\Gamma(\cdot,0)=\gamma$, $\Gamma(\cdot,1)=\tilde\gamma$,  $\Gamma(0,\cdot)=x^0$,  $\Gamma(1,\cdot)=x^1$. (FEP stands for `fixed end points'.) Then
\[
\int_\gamma v := \int_0^1v(\gamma(s))\cdot\gamma'(s)\di s=\int_0^1v(\tilde\gamma(s))\cdot\tilde\gamma'(s)\di s = \int_{\tilde\gamma} v.
\]
\end{lemma}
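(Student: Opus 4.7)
The plan is to reduce the lemma to the case of a smooth homotopy via reparametrisation and mollification, then apply a Green--type identity on the parameter square $[0,1]^2$.

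First, pick a smooth non-decreasing $\psi\colon[0,1]\to[0,1]$ which equals $0$ on $[0,\delta]$ and $1$ on $[1-\delta,1]$ for some $\delta>0$, and set $\gamma^*(s):=\gamma(\psi(s))$, $\tilde\gamma^*(s):=\tilde\gamma(\psi(s))$, and $\Gamma^*(s,t):=\Gamma(\psi(s),\psi(t))$. By the $C^1$ change-of-variables formula, $\int_{\gamma^*}v=\int_\gamma v$ and $\int_{\tilde\gamma^*}v=\int_{\tilde\gamma}v$. The reparametrised homotopy $\Gamma^*$ is now constant equal to $x^0$ on the strip $\{s\le\delta\}$, constant equal to $x^1$ on $\{s\ge 1-\delta\}$, independent of $t$ and equal to $\gamma^*$ on $\{t\le\delta\}$, and independent of $t$ and equal to $\tilde\gamma^*$ on $\{t\ge 1-\delta\}$. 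Extend $\Gamma^*$ continuously to all of $\R^2$ consistently with these constancy properties (the corner conditions are automatically compatible because $\gamma^*,\tilde\gamma^*$ are themselves constant near the endpoints).

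Next, mollify in both variables: let $\Gamma^*_\eps:=\Gamma^*\ast\eta_\eps\in C^\infty(\R^2;\R^n)$ with $\eta_\eps$ a standard mollifier. For $\eps<\delta/2$ and small enough that $\Gamma^*_\eps([0,1]^2)\sse\Omega$ (which holds by uniform convergence on compacta together with compactness of $\Gamma^*([0,1]^2)$ in $\Omega$), the constant-strip property forces $\Gamma^*_\eps(0,t)=x^0$ and $\Gamma^*_\eps(1,t)=x^1$ for all $t\in[0,1]$, while $\Gamma^*_\eps(\cdot,0)$ and $\Gamma^*_\eps(\cdot,1)$ coincide with the one-dimensional mollifications $\gamma^*_\eps,\tilde\gamma^*_\eps$ of $\gamma^*,\tilde\gamma^*$. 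Set
\[
P(s,t):=v(\Gamma^*_\eps(s,t))\cdot\pd_s\Gamma^*_\eps(s,t),\qquad Q(s,t):=v(\Gamma^*_\eps(s,t))\cdot\pd_t\Gamma^*_\eps(s,t).
\]
A direct expansion, using the hypothesis $\pd_jv_k=\pd_kv_j$ to cancel the first-order terms and Clairaut's theorem for the second-order ones, yields $\pd_tP-\pd_sQ=0$ on $[0,1]^2$. Integrating and using Fubini then gives
\[
\int_0^1\!\bigl[P(s,1)-P(s,0)\bigr]\di s=\int_0^1\!\bigl[Q(1,t)-Q(0,t)\bigr]\di t=0,
\]
the last equality because $\pd_t\Gamma^*_\eps\equiv0$ on $\{s=0\}\cup\{s=1\}$. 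The left-hand side is $\int_{\tilde\gamma^*_\eps}v-\int_{\gamma^*_\eps}v$, whence $\int_{\gamma^*_\eps}v=\int_{\tilde\gamma^*_\eps}v$. Letting $\eps\to0$ and using $C^1$-convergence of the mollifications (since $\gamma^*,\tilde\gamma^*$ are $C^1$), we conclude $\int_{\gamma^*}v=\int_{\tilde\gamma^*}v$, and the reparametrisation identity closes the argument.

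The main obstacle is engineering the approximation so that it is smooth enough to apply the Green-type identity while preserving the FEP side condition \emph{exactly}, so that the $Q$-contributions genuinely vanish rather than merely tend to zero; simultaneously, the top and bottom restrictions must approximate $\gamma$ and $\tilde\gamma$ in $C^1$ so that the line integrals converge. The hypothesis $\gamma'(0)=\gamma'(1)=0$ (and the analogous one for $\tilde\gamma$) is essential here: it is precisely what allows $\psi$ to be constant near the endpoints without altering the line integrals, which is in turn what makes $\Gamma^*$ constant on strips at all four sides of the square.
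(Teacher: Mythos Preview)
Your proof is correct and follows essentially the same strategy as the paper: reparametrise so that the homotopy is constant on strips along all four sides of the square, mollify to obtain a $C^\infty$-homotopy that still satisfies the FEP condition exactly, apply the Green/divergence identity on $[0,1]^2$, and pass to the limit via $C^1$-convergence of the boundary curves. One small remark: because your $\psi$ is smooth (the paper uses a merely piecewise-affine reparametrisation $\alpha$), your $\gamma^*=\gamma\circ\psi$ is automatically $C^1$ and the change-of-variables identity $\int_{\gamma^*}v=\int_\gamma v$ holds regardless of $\gamma'(0),\gamma'(1)$, so---contrary to your final paragraph---your argument does not actually use the hypothesis $\gamma'(0)=\gamma'(1)=0$; it is the paper's piecewise-affine choice that needs this condition to keep $\gamma\circ\alpha$ in $C^1$.
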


\begin{proof}
(i) In this step we suppose additionally that $\Gamma$ is twice continuously differentiable. We define the vector field $w\colon[0,1]^2\to \K^2$,
\[
 w(s,t):=\bigl(v(\Gamma(s,t))\cdot\frac\pd{\pd t}\Gamma(s,t),-v(\Gamma(s,t))\cdot\frac\pd{\pd s}\Gamma(s,t)\bigr).
\]
Then, using the hypothesis on $v$, one easily obtains $\div w=0$; hence the divergence theorem yields 
$ \int_{\pd[0,1]^2} w(s,t)\cdot\nu(s,t)\di\sigma(s,t) =0$,
where $\nu$ is the outer unit normal and $\sigma$ the surface measure. The integrals over the lines with $s=0$ and $s=1$ vanish, because 
$\frac\pd{\pd t}\Gamma(s,t)=0$ for $s=0$, $s=1$ and all $t\in[0,1]$. Hence the remaining integrals yield
\[
\int_0^1v(\Gamma(s,0))\cdot\frac\pd{\pd s}\Gamma(s,0)\di s - \int_0^1v(\Gamma(s,1))\cdot\frac\pd{\pd s}\Gamma(s,1)\di s = 0, 
\]
which is just the asserted equality.

(ii) In order to apply step (i) by smoothing the given homotopy $\Gamma$ we first `contract and extend' it as follows. We define the continuous function
$\alpha\colon[-1/4,5/4]\to[0,1]$ by $\alpha\restrict_{[-1/4,1/4]}:=0$, $\alpha\restrict_{[3/4,5/4]}:=1$, $\alpha\restrict_{[1/4,3/4]}$ affine linear,
and then we put $\tilde\Gamma\colon[-1/4,5/4]^2\to\Omega$, $\tilde\Gamma(s,t):=\Gamma(\alpha(s),\alpha(t))$. 
It is easy to see that then
\begin{equation}\label{eq-homot-1}
 \int_{\tilde\Gamma(\cdot,0)\restrict_{[0,1]}} v = \int_\gamma v\qquad\text{and}\qquad \int_{\tilde\Gamma(\cdot,1)\restrict_{[0,1]}} v = \int_{\tilde\gamma} v.
\end{equation}
Observe that
\[
 \tilde\Gamma(s,\cdot)\restrict_{[-1/4,1/4]}=\tilde\Gamma(s,0),\quad \tilde\Gamma(s,\cdot)\restrict_{[3/4,5/4]}=\tilde\Gamma(s,1)\qquad
 (s\in[-1/4,,5/4])
\]
and
\[
 \tilde\Gamma\restrict_{[-1/4,1/4]\times[-1/4,5/4]}=x^0,\quad \tilde\Gamma\restrict_{[3/4,5/4]\times[-1/4,5/4]}=x^1.
\]

Now let $(\rho_k)_{k\in\N}$ be a sequence in $\Cci(\R)_+$, $\spt\rho_k\sse[-1/k,1/k]$, $\int\rho_k(x)\di x=1$ for all $k\in\N$, and put
\[
 \tilde\Gamma_k(s,t):=\int_{-1/4}^{5/4}\int_{-1/4}^{5/4}\tilde\Gamma(s',t')\rho_k(s-s')\rho_k(t-t')\di s'\di t'
 \qquad(s,t\in[0,1],\ k\ge4).
\]
The properties mentioned above imply that $\tilde\Gamma_k(\cdot,j)=\tilde\Gamma(\cdot,j)*\rho_k$ and $\tilde\Gamma_k(j,\cdot)=\gamma(j)$ 
on $[0,1]$ for $j=0,1$ and all $k\ge4$. Using the uniform continuity of $\tilde\Gamma$ one shows that there exists $k_0\ge4$ such that 
$\tilde\Gamma_k([0,1]^2)\sse\Omega$ for all $k\ge k_0$.

Having established these properties, we can apply part (i) of the proof to conclude that
\begin{equation}\label{eq-homot-2}
 \int_{\tilde\Gamma_k(\cdot,0)} v = \int_{\tilde\Gamma_k(\cdot,1)} v \qquad (k\ge k_0). 
\end{equation}
From $\tilde\Gamma_k(\cdot,j)=\tilde\Gamma(\cdot,j)*\rho_k \to \tilde\Gamma(\cdot,j)$ in $C^1([0,1];\R^n)$ ($k\to\infty$) it follows that
\begin{equation}\label{eq-homot-3}
 \lim_{k\to\infty}\int_{\tilde\Gamma_k(\cdot,j)} v = \int_{\tilde\Gamma(\cdot,j)\restrict_{[0,1]}} v \qquad(j=0,1).
\end{equation}

Combining \eqref{eq-homot-2}, \eqref{eq-homot-3} and \eqref{eq-homot-1} we obtain the assertion of the lemma.
\end{proof}

\begin{proof}[Proof of Theorem~\ref{thm-poincare}]
(i) In this part we prove the theorem for the case that $\Omega$ is bounded and star-shaped with respect to an open ball $B(x^0,r)\sse\Omega$. Let
$\rho\in\Cci(\Omega)_+$ with $\spt\rho\sse B(x^0,r)$ and $\int\rho(x)\di x=1$, and let $B$ be the corresponding Bogovski{\bogi} operator. 
The transpose of $B\colon\cD(\Omega)\to\cD(\Omega)^n$ is the linear mapping $B^t\colon\cD'(\Omega)^n\to \cD'(\Omega)$, given by
\[
 \dup{B^tG}\phi :=\dup G{B\phi}\qquad(\phi\in\cD(\Omega),\ G\in\cD'(\Omega)^n),
\]
and $B^t$ is continuous with respect to the standard topologies. 

The restriction of $-B^t$ to $C^\infty(\Omega;\K^n)$ is the operator $A\colon C^\infty (\Omega;\K^n)\to C^\infty(\Omega)$, given by
\begin{equation}\label{equ-Bogovskii-adjoint}
 Av(x):=\int_\Omega\rho(y)\int_0^1v(sy+(1-s)x)\cdot(x-y)\di s\di y;
\end{equation}
see Remark~\ref{rem-explanation-Bogovskii}.
It has the property that $\grad Av=v$ for all 
\[
 v\in C^\infty_\gamma(\Omega;\K^n):=\set{u\in C^\infty(\Omega;\K^n)}{\pd_ju_k=\pd_ku_j\ (j,k=1,\dots,n)}.
\]

Now let $G\in\cD'(\Omega)^n$ with $\pd_jG_k=\pd_kG_j$ for all $j,k\in\{1,\dots,n\}$. We show that $G$ can be approximated by vector fields $v\in C^\infty_\gamma(\Omega;\K^n)$. 
Without loss of generality we assume that $x^0$ (the centre of the ball mentioned initially) is the origin. 
In a first step
we introduce distributions $G^\la$ for $\la\in(1,\infty)$, defined on $\la\Omega$ by
\[
\dup {G^\la}\phi := \bdup G{\phi\bigl(\tfrac1\la\,\cdot\,\bigr)}\qquad(\phi\in \cD(\la\Omega)^n).
\]
It is easy to see that $G^\la$ satisfies the compatibility condition and that for all $\phi\in\cD(\Omega)^n$ one obtains $\bdup{G^\la}\phi \to \dup G\phi$ as $\la\to 1$.

In order to approximate $G^\la$ from $C^\infty_\gamma(\Omega;\K^n)$ we fix $\la>1$ and observe that $\la\Omega$ is an open neighbourhood of the compact set $\ol\Omega$, due to the strict 
star-shapedness of $\Omega$. Let $(\rho_l)_{l\in\N}$ be a $\delta$-sequence in $\Cci(\R^n)_+$. Then, for large $l$, the function 
\[
 x\mapsto v^{\la,l}(x) := \bdup{G^\la}{\rho_l(\cdot-x)}=\bigl(\bdup{G_k^\la}{\rho_l(\cdot-x)}\bigr)_{k=1,\dots,n}
\]
is defined on $\Omega$ and belongs to $C^\infty(\Omega;\K^n)$. For all $j,k=1,\dots,n$ one obtains
\begin{align*}
\pd_{j,x}\bdup {G_k^\la}{\rho_l(\cdot-x)} &=\bdup{G_k^\la}{\pd_{j,x}\rho_l(\cdot-x)}= -\bdup{G_k^\la}{\pd_j\rho_l(\cdot-x)} =\bdup{\pd_jG_k^\la}{\rho_l(\cdot-x)}\\
&=\bdup{\pd_kG_j^\la}{\rho_l(\cdot-x)} = \cdots =\pd_{k,x}\bdup {G_j^\la}{\rho_l(\cdot-x)};
\end{align*}
hence $v^{\la,l}\in C^\infty_\gamma(\Omega;\K^n)$.

In order to keep notation simple we compute the convergence $v^{\la,l}\to G^\la$ in $\cD'(\Omega)^n$ componentwise (where we use the same symbol for the function $v^{\la,l}$ and for the regular distribution generated by this function). Thus, let $\phi\in\cD(\Omega)$, $k\in\{1,\dots,n\}$. Then
\begin{align*}
\bdup{v_k^{\la,l}}\phi &= \int v_k^{\la,l}(x)\phi(x)\di x=\int\bdup{G_k^{\la}}{\rho_l(\cdot-x)}\phi(x)\di x\\
&=\bdup{G_k^\la}{\int\rho_l(\cdot-x)\phi(x)\di x}= \bdup {G_k^\la}{\rho_l*\phi}.
\end{align*}
Now, $\rho_l*\phi\to\phi$ in $\cD(\la\Omega)$, hence $v_k^{\la,l}\to G_k^\la$ ($l\to\infty$) in the weak topology $\sigma(\cD'(\Omega),\cD(\Omega))$.

To complete this step we first recall from above that
\[
- \grad B^tv^{\la,l} = \grad Av^{\la,l}=v^{\la,l}.
\]
Letting $l\to\infty$ we obtain $-\grad B^tG^\la=G^\la$, and then letting $\la\to1$ we finally get $-\grad B^tG=G$. This shows that $F:=-B^tG$ has the asserted property.

(ii)
For the general case, let $(B(x^m,r_m))_{m\in N}$ be a countable covering of $\Omega$ by open balls, with $1\in N\sse\N$; for brevity we put 
$\Omega_m:= B(x^m,r_m)$ ($m\in N$). From part (i) we conclude that for each $m\in N$ there exists $F_m\in\cD'(\Omega_m)$ such that $\grad F_m=G$ on $\Omega_m$.

Now it remains to show that there exist constants $c_m\in\K$ such that $(c_m+F_m)_{m\in N}$ is a consistent family of distributions. Without restriction we assume that $N=\{1,\dots,m_0\}$ or $N=\N$, and that $\Omega_m\cap\bigcup_{k=1}^{m-1}\Omega_k\ne\varnothing$ for all $m\in N\setminus\{1\}$. Put $c_1:=0$. 

Let $m'\in N\setminus\{1\}$ and assume that $c_2,\dots,c_{m'-1}$ are found such that the family $(c_m+F_m)_{m=1,\dots,m'-1}$ is consistent; without restriction $c_2=\dots=c_{m'-1}=0$. Denote by $\hat F_{m'-1}\in\cD'(\bigcup_{m=1}^{m'-1}\Omega_m)$  the composed distribution satisfying $\widehat F_{m'-1}=F_m$ on $\Omega_m$ for all $m=1,\dots,m'-1$. There exists $m\in\{1,\dots,m'-1\}$ such that $\Omega_{m'}\cap \Omega_m\ne\varnothing$, and 
$\grad F_{m'}=G=\grad F_m$ on $\Omega_{m'}\cap\Omega_m$ implies that there exists $c_{m'}\in\K$ such that $c_{m'}+F_{m'}=F_m$ on $\Omega_{m'}\cap \Omega_m$; 
without restriction $c_{m'}=0$. In order to make sure that $F_{m'}$ is consistent with $\hat F_{m'-1}$ we have to show that $F_{m'}$ is consistent with $F_{\tilde m}$ for all 
$\tilde m\in\{1,\dots,m'-1\}\setminus\{m\}$. Thus, let $\tilde m\in\{1,\dots,m'-1\}\setminus\{m\}$ be such that $\Omega_{m'}\cap\Omega_{\tilde m}\ne\varnothing$.
Then as before there exists $\tilde c_{m'}\in\K$ such that $\tilde c_{m'}+F_{m'}=F_{\tilde m}$ on $\Omega_{m'}\cap \Omega_{\tilde m}$.
There exists a continuously differentiable path $\gamma\colon[0,1]\to \Omega_m\cup \Omega_{m'}$ from $x^m$ to $x^{m'}$ satisfying 
$\gamma'(0)=\gamma'(1)=0$, and there exists a continuously differentiable path $\tilde \gamma\colon[0,1]\to\bigcup_{k=1}^{m'} \Omega_k$ from $x^m$ to $x^{m'}$ satisfying $\tilde\gamma([0,1/2])\sse\bigcup_{k=1}^{m'-1}\Omega_k$, $\tilde\gamma(1/2)=x^{\tilde m}$, 
$\tilde\gamma([1/2,1])\sse \Omega_{\tilde m}\cup \Omega_{m'}$, and $\tilde\gamma'(0)=\tilde\gamma'(1)=0$.
The simple connectedness of $\Omega$ implies that there exists an FEP-homotopy $\Gamma\colon[0,1]^2\to\Omega$ between $\gamma$ and $\tilde\gamma$. 
As $\Gamma([0,1]^2)$ is compact, there exists $r>0$ such that $\Gamma([0,1]^2)+B(0,2r)\sse\Omega$. Let $\rho\in\Cci(\R^n)_+$, $\spt\rho\sse B(0,r)$, 
$\int\bsp\rho(x)\di x=1$, and put
\[
 v(x):=\dup G{\rho(\cdot-x)} \qquad(x\in\Gamma([0,1]^2)+B(0,r)).
\]
Then $v\in C^\infty(\Gamma([0,1]^2)+B(0,r);\K^n)$, and the
hypothesis on $G$ implies that  $\pd_jv_k=\pd_kv_j$ for all $j,k=1,\dots,n$. Applying Lemma~\ref{lem-homotopy} with the open set 
$\Gamma([0,1]^2)+B(0,r)$ we conclude that
\begin{equation}\label{eq-appl-homot}
 \int_\gamma\bsp v =\int_{\tilde\gamma}\bsp v
\end{equation}
The distributions 
$F_m$, $F_{m'}$ are consistent; call $\check F\in\cD'(\Omega_m\cup \Omega_{m'})$ the combined distribution. Then $\grad\check F=G$ on 
$\Omega_m\cup \Omega_{m'}$, and
\begin{align*}
\dup G{\rho(\cdot-\gamma(s))}\cdot\gamma'(s)&=\dup{\grad\check F}{\rho(\cdot-\gamma(s))}\cdot\gamma'(s)\\
&=-\dup{\check F}{\grad\rho(\cdot-\gamma(s))}\cdot\gamma'(s)
=\frac\di{\di s}\dup{\check F}{\rho(\cdot-\gamma(s)},
\end{align*}
\begin{align*}
\int_\gamma\bsp v &= \int_0^1\<\dup G{\rho(\cdot-\gamma(s))}\cdot\gamma'(s)\di t
=\dup{\check F}{\rho(\cdot-x^{m'})}-\dup{\check F}{\rho(\cdot-x^{m})}\\
&=\dup{F_{m'}}{\rho(\cdot-x^{m'})}-\dup{F_m}{\rho(\cdot-x^{m})}\\
&=\dup{F_{m'}}{\rho(\cdot-x^{m'})}-\dup{\hat F_{m'-1}}{\rho(\cdot-x^{m})}.
\end{align*}
By the same token,
\begin{align*}
\int_{\tilde\gamma}\bsp v &=\int_0^1\<\dup G{\rho(\cdot-\tilde\gamma(s))}\cdot\tilde\gamma'(s)\di s\\
&=\int_0^{1/2}\<\dup G{\rho(\cdot-\tilde\gamma(s))}\cdot\tilde\gamma'(s)\di s+ \int_{1/2}^1\<\dup G{\rho(\cdot-\tilde\gamma(s))}\cdot\tilde\gamma'(s)\di s\\
&=\dup{\hat F_{m'-1}}{\rho(\cdot-x^{\tilde m})}-\dup{\hat F_{m'-1}}{\rho(\cdot-x^{m})}\\
&\qquad\qquad\qquad+ \dup{\tilde c_{m'}+F_{m'}}{\rho(\cdot-x^{m'})}-\dup{F_{\tilde m}}{\rho(\cdot-x^{\tilde m})}\\
&=\tilde c_{m'}+\dup{F_{m'}}{\rho(\cdot-x^{m'})}-\dup{\hat F_{m'-1}}{\rho(\cdot-x^{m})}.
\end{align*}
From \eqref{eq-appl-homot} we obtain $\tilde c_{m'}=0$, and this shows that the family $(F_m)_{m\in\{1,\dots,m'\}}$ is consistent; hence by induction, the family $(F_m)_{m\in N}$ is consistent.
\end{proof}

\begin{remark}
We note that Theorem~\ref{thm-poincare} implies classical versions of Poincar\'e's lemma: If $\Omega\sse\R^n$ is open and simply connected, and $g\in C^l(\Omega;\K^n)$ (for some $l\in\N_0$) is a vector field satisfying
\[
 \pd_j g_k=\pd_k g_j\qquad(j,k=1,\dots,n),
\]
then there exists $f\in C^{l+1}(\Omega)$ with $\grad f=g$. Indeed, the distributional solution $F$ of $\grad F=g$ obtained by Theorem~\ref{thm-poincare} is automatically a regular distribution generated by a $C^{l+1}$-function.
\end{remark}

\begin{remark}\label{rem-explanation-Bogovskii} On the Bogovski{\bogi} operator $B\colon \Cci(\Omega)\to \Cci(\Omega;\K^n)$ and the operator $A\colon C^\infty(\Omega;\K^n)\to C^\infty(\Omega)$ from \eqref{equ-Bogovskii-adjoint}.

Let $\Omega$ and $\rho$ be as at the beginning of Section~\ref{sec-derham}. It is a standard exercise of calculus that $\grad Av=v$ for all $v\in C_\gamma^\infty(\Omega;\K^n)$. Substitution of variables yields $\int(Av)\phi\di x=-\int v\cdot B\phi\di x$ for all $v\in C^\infty(\Omega;\K^n)$, $\phi\in\Cci(\Omega)$.

Let $\phi,\psi\in\Cci(\Omega)$, $\int \phi\di x=0$. Note that $\grad\psi\in C_\gamma^\infty(\Omega;\K^n)$, therefore $\grad(A\grad\psi-\psi)=0$; hence $A\grad\psi-\psi$ is constant, $\int(A\grad\psi-\psi)\phi\di x=0$. This implies
\[
 \int\psi\div\< B\phi\di x=-\int\grad\psi\cdot B\phi\di x=\int (A\grad\psi) \phi\di x=\int\psi \phi\di x.
\]
As this equality holds for all $\psi\in\Cci(\Omega)$ one obtains $\div\< B\phi =\phi$.
\end{remark}

\section{The `coarse' and `simplified' versions of de Rahm's theorem}
\label{sec-derham-c-s}

In this section we treat the existence of potentials in a Hilbert space context. Let $\Omega\sse\R^n$ be a connected bounded open set.
We define the Sobolev spaces
\begin{align*}
 \Honens(\Omega;\K^n)&:=\set{u\in\Honen(\Omega;\K^n)}{\div u=0},\\
 \Honesn(\Omega;\K^n)&:=\ol{C^\infty_{\comp,\sigma}(\Omega;\protect{\K^n})}^{\Honen}, 
 \end{align*}
 where $C^\infty_{\comp,\sigma}(\Omega;\K^n):=\set{\phi\in\Cci(\Omega;\K^n)}{\div \phi=0}$. For a subspace $V\sse\Honen(\Omega;\K^n)$ we define the polar
 \[
  V^\polar:=\set{g\in H^{-1}(\Omega)^n}{\dup g \phi_{H^{-1},\Honen}=0\ (\phi\in V)},
 \]
 where $H^{-1}(\Omega)$ is the anti-dual space of $\Honen(\Omega)$ in the Gelfand triple $\Honen(\Omega)\sse L_2(\Omega)\sse H^{-1}(\Omega)$.
 
 We adopt the terminology in the title of this section from \cite{AmroucheCiarletMardare2015} (formerly used already in \cite[Chap.\;I, \S\,2]{GiraultRaviart1986}), where it is defined that the ``coarse version of de Rham's theorem'' holds (for $\Omega$) if
 \begin{align}\tag{$\mathrm H_0$}\label{propH}
  \text{for all }g\in\Honens(\Omega;\K^n)^\polar \text{ there exists }f\in L_2(\Omega)\text{ with }g=\grad f,
 \end{align}
and  the ``simplified version of de Rham's theorem'' holds if
 \begin{align}\tag{$\mathrm H_\comp$}\label{propH'}
  \text{for all }g\in C^\infty_{\comp,\sigma}(\Omega;\K^n)^\polar \text{ there exists }f\in L_2(\Omega)\text{ with }g=\grad f.
 \end{align}
Note that property \eqref{propH'} could have been stated equivalently with $\Honesn(\Omega;\K^n)$ instead of $C^\infty_{\comp,\sigma}(\Omega;\K^n)$, because these spaces have the same polar.
Also note that in both of these properties the existence of $f$ could have been stated with the additional property that 
$f\in L_2^0(\Omega):=\set{g\in L_2(\Omega)}{\int g(x)\di x=0}$. 

From $\Honens(\Omega;\K^n)^\polar\sse\Honesn(\Omega;\K^n)^\polar$ it is clear that property \eqref{propH'} implies \eqref{propH}. It is shown in \cite[Theorem~4.1]{AmroucheCiarletMardare2015} that for $\Omega$ with Lipschitz boundary, ``J.\,L.\;Lions' lemma'' together with \eqref{propH} implies \eqref{propH'}; see also Remark~\ref{rem-ACM}. 
We are going to show that \eqref{propH'} is `hereditary' (Proposition~\ref{prop-H'localised}), that \eqref{propH} holds and implies \eqref{propH'} for strictly
star-shaped sets (Remark~\ref{rem-bog}) and that \eqref{propH'} holds for sets with Lipschitz boundary (Corollary~\ref{cor-H'localised}).

\begin{remark}\label{rem-dense}
Let $\grad$ denote the operator $\grad\colon L_2(\Omega)\to H^{-1}(\Omega)^n$, $f\mapsto\grad f$. Then $\ran(\grad)\sse\Honens(\Omega;\K^n)^\polar\sse \Honesn(\Omega;\K^n)^\polar$ 
($=C^\infty_{\comp,\sigma}(\Omega;\K^n)^\polar$), and \eqref{propH} is equivalent to $\ran(\grad)=\Honens(\Omega;\K^n)^\polar$, whereas \eqref{propH'} is equivalent to $\ran(\grad)=\Honesn(\Omega;\K^n)^\polar$. This implies that \eqref{propH'} is equivalent to \eqref{propH} together with $\Honesn(\Omega;\K^n)=\Honens(\Omega;\K^n)$. Note that the last equality is equivalent to the denseness of $C^\infty_{\comp,\sigma}(\Omega;\K^n)$ in $\Honens(\Omega;\K^n)$.
\end{remark}

We start our investigation of these properties by a proof of the equivalence of property \eqref{propH'} and ``J.\,L.\;Lions' lemma''; see 
\cite[Theorem~4.1]{AmroucheCiarletMardare2015}.

\begin{theorem}\label{thm-H'-lions}
Let $\Omega\sse\R^n$ be a connected bounded open set. Then property \eqref{propH'} for $\Omega$ is equivalent to the property that for all 
$F\in\cD'(\Omega)$ with $\grad F\in H^{-1}(\Omega)^n$ there exists $f\in L_2(\Omega)$ with $F=f$ (as distributions).
\end{theorem}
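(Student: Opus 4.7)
The plan is to derive this equivalence directly from Theorem~\ref{thm-derham} (distributional de Rham), with connectedness and boundedness of $\Omega$ used to convert between the distributional and $L_2$ settings. The key observation is that, under the canonical embedding $H^{-1}(\Omega)\hookrightarrow\cD'(\Omega)$, the polar condition defining membership in $C^\infty_{\comp,\sigma}(\Omega;\K^n)^\polar$ coincides with the hypothesis~\eqref{hyp H'} in Theorem~\ref{thm-derham}.

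For the forward direction, assume \eqref{propH'} and let $F\in\cD'(\Omega)$ with $g:=\grad F\in H^{-1}(\Omega)^n$. For any $\phi\in C^\infty_{\comp,\sigma}(\Omega;\K^n)$ one has
\[
  \dup{g}{\phi}_{H^{-1},\Honen} = \dup{\grad F}{\phi} = -\dup{F}{\div\phi} = 0,
\]
so $g\in C^\infty_{\comp,\sigma}(\Omega;\K^n)^\polar$. By \eqref{propH'} there exists $f\in L_2(\Omega)$ with $\grad f=g=\grad F$, hence $\grad(F-f)=0$ in $\cD'(\Omega)$. Because $\Omega$ is connected, $F-f$ equals a constant $c\in\K$; because $\Omega$ is bounded, $c\in L_2(\Omega)$, and therefore $F=f+c\in L_2(\Omega)$.

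For the converse, assume the Lions-type property and let $g\in C^\infty_{\comp,\sigma}(\Omega;\K^n)^\polar\sse H^{-1}(\Omega)^n$. Viewing $g$ as an element of $\cD'(\Omega)^n$ via restriction to test fields, the polar condition reads $\dup{g}{\phi}=0$ for every $\phi\in\cD(\Omega)^n$ with $\div\phi=0$, which is precisely hypothesis~\eqref{hyp H'}. Since $\Omega$ is connected, Theorem~\ref{thm-derham} supplies $F\in\cD'(\Omega)$ with $\grad F=g$. But then $\grad F=g\in H^{-1}(\Omega)^n$, so by assumption $F$ is represented by some $f\in L_2(\Omega)$, and $\grad f=g$ establishes \eqref{propH'}.

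No serious obstacle arises: Theorem~\ref{thm-derham} does the structural work of producing a distributional potential, and the equivalence is essentially a reformulation lemma. The only points requiring a touch of care are (i) the compatibility of the $H^{-1}$--$\Honen$ pairing with the distributional pairing on test functions, which justifies identifying the polar condition with \eqref{hyp H'}, and (ii) the use of both connectedness (to conclude $F-f$ is constant) and boundedness (to place that constant in $L_2(\Omega)$) at the end of the forward direction.
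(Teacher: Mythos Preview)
Your proof is correct and follows essentially the same route as the paper's: the forward direction shows $\grad F$ lies in the polar, invokes \eqref{propH'} to get $f$, and uses connectedness (plus boundedness) to absorb the constant; the converse applies Theorem~\ref{thm-derham} to produce a distributional potential and then the Lions-type hypothesis to upgrade it to $L_2$. You have simply spelled out a few details (the verification that $\grad F\in C^\infty_{\comp,\sigma}(\Omega;\K^n)^\polar$ and the role of boundedness for the constant) that the paper leaves implicit.
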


\begin{proof}
For the proof of the necessity let $F\in\cD'(\Omega)$ such that $\grad F\in H^{-1}(\Omega)^n$. Then $\grad F\in C^\infty_{\comp,\sigma}(\Omega;\K^n)^\polar$; hence \eqref{propH'} implies that there exists 
$f\in L_2(\Omega)$ with $\grad f=\grad F$, and because $\Omega$ is connected there exists $c\in\K$ such that $F$ is generated by the function $f+c\in L_2(\Omega)$.

For the sufficiency let $g\in C^\infty_{\comp,\sigma}(\Omega;\K^n)^\polar$. Then Theorem~\ref{thm-derham} implies that there exists $F\in\cD'(\Omega)$ such that $\grad F=g\in H^{-1}(\Omega;\K^n)$; hence there exists $f\in L_2(\Omega)$ with $F=f$, $g=\grad F=\grad f$.
\end{proof}

The following proposition shows a hereditary property of \eqref{propH'}.

\begin{proposition}\label{prop-H'localised}
Let $\Omega\sse\R^n$ be a connected bounded open set, and assume that there exists a (finite) covering $(\Omega_j)_{j=1,\dots,m}$ of $\Omega$ by connected open sets $\Omega_j\sse \Omega$ with property \eqref{propH'}. Then property \eqref{propH'} holds for  $\Omega$. 
\end{proposition}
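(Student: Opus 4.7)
The plan is to reduce the assertion to the characterisation provided by Theorem~\ref{thm-H'-lions}: property \eqref{propH'} on $\Omega$ is equivalent to the statement that every $F\in\cD'(\Omega)$ with $\grad F\in H^{-1}(\Omega)^n$ is (generated by) a function in $L_2(\Omega)$. Since each $\Omega_j$ has property \eqref{propH'} and is connected (and bounded, being a subset of the bounded $\Omega$), the same characterisation is available on each $\Omega_j$. So given an arbitrary $F\in\cD'(\Omega)$ with $\grad F\in H^{-1}(\Omega)^n$, the goal becomes to construct $f\in L_2(\Omega)$ representing $F$ by localising and gluing.

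I would first argue that the restriction $F|_{\Omega_j}\in\cD'(\Omega_j)$ satisfies $\grad(F|_{\Omega_j})\in H^{-1}(\Omega_j)^n$. For this, extension by zero $\Honen(\Omega_j)\hookrightarrow\Honen(\Omega)$ is an isometric embedding, so the dual restriction map $H^{-1}(\Omega)\to H^{-1}(\Omega_j)$ is well-defined and continuous, and $\grad(F|_{\Omega_j})$ is the restriction in this sense of $\grad F$. Applying the characterisation from Theorem~\ref{thm-H'-lions} on each $\Omega_j$ therefore produces functions $f_j\in L_2(\Omega_j)$ such that $F|_{\Omega_j}=f_j$ as distributions on~$\Omega_j$.

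Next I would glue: on each nonempty overlap $\Omega_j\cap\Omega_k$, the $L_2$-functions $f_j$ and $f_k$ represent the same distribution $F|_{\Omega_j\cap\Omega_k}$, hence agree a.e.\ there. This defines a measurable function $f$ on $\Omega$ with $f=f_j$ a.e.\ on $\Omega_j$; and since the covering is \emph{finite},
\[
 \int_\Omega|f|^2\di x\le\sum_{j=1}^m\int_{\Omega_j}|f_j|^2\di x<\infty,
\]
so $f\in L_2(\Omega)$. To verify $F=f$ globally, I would pick a partition of unity $(\chi_j)_{j=1,\dots,m}$ on $\Omega$ subordinate to $(\Omega_j)$; for any $\phi\in\cD(\Omega)$ the product $\chi_j\phi$ has compact support contained in $\spt\phi\cap\Omega_j\sse\Omega_j$, so $\chi_j\phi\in\cD(\Omega_j)$, and
\[
 \dup F\phi=\sum_{j=1}^m\dup F{\chi_j\phi}=\sum_{j=1}^m\int_{\Omega_j}f_j\+\chi_j\phi\di x=\int_\Omega f\phi\di x.
\]
This gives $F=f$ as distributions on $\Omega$, and Theorem~\ref{thm-H'-lions} (in the converse direction) then yields property \eqref{propH'} for $\Omega$.

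The only delicate point is the partition-of-unity step at the end: one must use a partition of unity whose summands $\chi_j$ have \emph{support} (closed in $\Omega$) contained in $\Omega_j$ but need not have compact support in $\R^n$, so that $\chi_j\phi$ nonetheless has compact support in $\Omega_j$ by inheriting compactness from $\phi$. Once this is observed, the rest of the argument is a routine sheaf-type patching.
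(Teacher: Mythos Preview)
Your proof is correct but organised differently from the paper's. The paper works directly with the defining property \eqref{propH'}: given $g\in C^\infty_{\comp,\sigma}(\Omega;\K^n)^\polar$, it first restricts to each $\Omega_j$ and uses \eqref{propH'} there to obtain local potentials $f_j\in L_2(\Omega_j)$ with $\grad f_j=g$ on $\Omega_j$; then it invokes de~Rham's theorem (Theorem~\ref{thm-derham}) to get a single distribution $F\in\cD'(\Omega)$ with $\grad F=g$, and uses this $F$ to find constants $c_j$ making the family $(f_j+c_j)$ consistent. You instead route everything through the Lions-lemma characterisation of Theorem~\ref{thm-H'-lions}: you start with a global $F$ and localise to $\Omega_j$ via the same characterisation, so the identity $f_j=F|_{\Omega_j}$ gives consistency for free and no constants need adjusting. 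Both arguments ultimately rest on Theorem~\ref{thm-derham} (in your case, hidden inside the proof of Theorem~\ref{thm-H'-lions}); your packaging is slightly more indirect but avoids the explicit constant-matching step, while the paper's version makes the role of de~Rham's theorem more visible.
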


\begin{proof}
Let $g\in \Honesn(\Omega;\K^n)^\polar$. Then clearly 
$g^j:=g\restrict_{\Honen(\Omega_j;\K^n)}
\in\Honesn(\Omega_j;\K^n)^\polar$, and the hypothesis implies that for all $j=1,\dots,m$ there exists 
$f_j\in L_2(\Omega_j)$ such that $g^j=\grad f_j$.
Note that each function $f_j$ is only determined up to a constant, and we have to `glue together' suitable 
versions of these functions.

We apply Theorem~\ref{thm-derham} and obtain a distribution $F\in\cD'(\Omega)$ such that $g=\grad F$.
This implies that for each $j\in\{1,\dots,m\}$ one has $\grad F=\grad f_j$ on~$\Omega_j$, and because $\Omega_j$ is connected, there
exists $c_j\in\K$ such that $F=f_j+c_j\indic_{\Omega_j}$ on~$\Omega_j$. This implies that the family $(f_j+c_j\indic_{\Omega_j})_{j=1,\dots,m}$ of $L_2$-functions
is consistent; hence there exists $f\in L_2(\Omega)$ with $f\restrict_{\Omega_j}=f_j+c_j\indic_{\Omega_j}$ for all $j=1,\dots,m$, $\grad f=\grad F=g$.
\end{proof}

Next we turn to more concrete sufficient conditions for the validity of \eqref{propH'}. The fundamental source of these conditions is an important fact from \cite[Lemma~1]{Bogovskii1979}, stated in part~(a) of the following remark.

\begin{remark}\label{rem-bog}
Let $\Omega\sse\R^n$ be a bounded open set which is star-shaped with respect to an open ball $B(x^0,r)$.

(a) Then the corresponding Bogovski{\bogi} operator $B\colon\Cci(\Omega)\to\Cci(\Omega;\K^n)$ possesses a (unique) bounded linear extension
$B\colon L_2(\Omega)\to\Honen(\Omega;\K^n)$. (The paper \cite{Bogovskii1979} does not contain a proof of this fact, but rather the hint that for the proof one should 
use~\cite{CalderonZygmund1956}; this hint has been executed in \cite[Theorem~2.4]{BorchersSohr1990}, \cite[Section~III.3]{Galdi2011}.) From $\div\< B\phi=\phi$ for $\phi\in\Cci\cap L_2^0(\Omega)$ it follows that $\div\< Bf=f$ for all $f\in L_2^0(\Omega)$. 

(b) It is easy to see that the bounded operators $\div\colon\Honen(\Omega;\K^n)\to L_2(\Omega)$ and $\grad\colon L_2(\Omega)\to H^{-1}(\Omega)^n$ are 
negative adjoints of each other, and therefore $\ran(\grad)^\polar=\ker(\div)$ -- a well-established relation for operators on Hilbert spaces -- implies 
$\overline{\ran(\grad)}=\ker(\div)^\polar$. As $\ran(\div)= L_2^0(\Omega)$ is closed, the closed range theorem implies that $\ran(\grad)$ is closed; hence
$\ran(\grad)=\ker(\div)^\polar=\Honens(\Omega;\K^n)^\polar$. This shows that \eqref{propH} is satisfied.

(c) We now show that even \eqref{propH'} is satisfied; this follows if we show that $C^\infty_{\comp,\sigma}(\Omega;\K^n)$ is dense in 
$\Honens(\Omega;\K^n)$. Without restriction $x^0=0$. Let $\phi\in \Honens(\Omega;\K^n)$. Then the extension of $\phi$ to $\R^n$ by zero belongs to $\Honens(\R^n;\K^n)$; we keep the notation $\phi$ for the extension. For $\lambda\in(1,\infty)$ we note that $\phi(\lambda\,\cdot)\in\Honens(\Omega;\K^n)$ has compact support
contained in $\lambda^{-1}\overline\Omega\sse\Omega$. Regularisation shows that $\phi(\lambda\,\cdot)$ is approximated by functions in $C^\infty_{\comp,\sigma}(\Omega;\K^n)$; hence $\phi(\lambda\,\cdot)\in\Honesn(\Omega;\K^n)$. Taking $\lambda\to1$ we then obtain $\phi\in\Honesn(\Omega;\K^n)$. (This reasoning has also been used in \cite[proof of Lemma~9, p.\,31]{Tartar1978}.)
\end{remark}

\begin{corollary}\label{cor-H'localised}
Let $\Omega\sse\R^n$ be a connected bounded open set, and assume that there exists a covering $(\Omega_j)_{j=1,\dots,m}$ of $\Omega$ by open sets 
$\Omega_j\sse \Omega$, where each $\Omega_j$ is 
star-shaped with respect to an open ball. Then property \eqref{propH'} holds for $\Omega$.

These assertions apply to any connected bounded open set with Lipschitz boundary.
\end{corollary}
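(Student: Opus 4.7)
The plan is to obtain the first assertion directly from the machinery already assembled and then to verify that bounded Lipschitz domains admit the required type of cover.

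For the first part, I would simply combine Remark~\ref{rem-bog}(c) with Proposition~\ref{prop-H'localised}. Remark~\ref{rem-bog}(c) shows that each $\Omega_j$, being star-shaped with respect to an open ball, satisfies property~\eqref{propH'}; star-shaped sets are automatically connected, so the hypotheses of Proposition~\ref{prop-H'localised} are met, and \eqref{propH'} transfers to $\Omega$.

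For the second assertion, given a connected bounded open set $\Omega$ with Lipschitz boundary, I would construct the cover pointwise and then extract a finite subcover. At each interior point $x \in \Omega$, pick an open ball $B(x, r_x) \sse \Omega$, which is trivially star-shaped with respect to itself. At each boundary point $x \in \pd\Omega$, the Lipschitz condition yields a neighbourhood $U_x$ and, after a rotation, a Lipschitz function $f$ such that $\Omega \cap U_x$ is an epigraph of the form $\{(y',y_n)\in U_x : y_n > f(y')\}$. A standard construction -- choosing a truncated cylinder of radius and height depending on the Lipschitz constant of $f$ -- produces an open set $\Omega_x \sse \Omega \cap U_x$ which is star-shaped with respect to a small open ball sitting strictly above the graph of $f$. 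By compactness of $\ol\Omega$, finitely many of the sets $B(x,r_x)$ and $\Omega_x$ already cover $\ol\Omega$, hence also $\Omega$, and the first part of the corollary applies to yield~\eqref{propH'}.

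The main technical obstacle will be the quantitative star-shapedness construction at boundary points: the aperture of the admissible cone of directions, the radius of the interior ball with respect to which star-shapedness holds, and the diameter of the neighbourhood $\Omega_x$ must all be chosen in a compatible way depending on the Lipschitz constant of $f$. This is a classical feature of Lipschitz domains (essentially the interior cone condition) and I would invoke it from the literature rather than redo the elementary but fiddly geometric estimates.
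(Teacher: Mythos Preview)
Your proposal is correct and follows essentially the same route as the paper: for the first assertion you invoke Remark~\ref{rem-bog}(c) and Proposition~\ref{prop-H'localised} exactly as the paper does, and for the Lipschitz case you construct the covering pointwise (balls at interior points, star-shaped neighbourhoods from the Lipschitz graph condition at boundary points) and extract a finite subcover by compactness of~$\ol\Omega$. The paper phrases the boundary construction more tersely (``it results from the Lipschitz property of $\partial\Omega$'') and refers to \cite[Theorem~2.1]{AmroucheCiarletMardare2015}, but the content is the same.
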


\begin{proof}
From Remark~\ref{rem-bog}(c) we know that the sets $\Omega_j$ have property \eqref{propH'}. Hence Proposition~\ref{prop-H'localised} implies the assertion.

If $\Omega$ has Lipschitz boundary, then it is not difficult to
see that for all $x\in\ol{\Omega}$ there exists an open
neighbourhood $U_x$ such that $U_x\cap\Omega$ is star-shaped with respect to
the points of a ball in $U_x\cap\Omega$. This is obvious for $x\in\Omega$, and
for $x\in\partial\Omega$ it results from the Lipschitz property of $\partial\Omega$. The compactness of
$\ol{\Omega}$ implies that there exists a finite open covering
$(\Omega_j)_{j=1,\ldots,m}$ of $\Omega$ as required in the first part of the corollary. (See also \cite[Theorem~2.1]{AmroucheCiarletMardare2015}.)
\end{proof}

\begin{remark}\label{rem-ACM}
In \cite[Theorem~3.1]{AmroucheCiarletMardare2015} an `equivalence result' is established between various classical results, e.g.\ the (classical) J.\,L.\;Lions lemma, the J.\;Ne\v cas inequality, property \eqref{propH} and the property that $\div\colon \Honens(\Omega;\K^n)\to  L_2^0(\Omega)$ is surjective, for bounded open sets with Lipschitz boundary. Each of these results by itself is of a notoriously high technical level, and in \cite[Section~2]{AmroucheCiarletMardare2015} several sources for the results are mentioned.

A treatment of part of these properties, for the case that $\Omega$ has $C^1$-boundary, can be found in \cite[pp.~26--32]{Tartar1978}.
Related problems are treated in a more general setting in \cite{AmroucheGiraud1994}. For further information
we also mention \cite{Ciarlet2013, GiraultRaviart1986, Temam1977}.
\end{remark}

\section{A `weak version' of Poincar\'e's lemma}
\label{sec-wvPlemma}
 
In the previous section we have seen how the distributional version of de Rham's theorem can be applied in the $L_2$-context. Here we present an application of the distributional version of 
Poincar\'e's lemma. 

\begin{theorem}\label{thm-wvPlemma}
Let $\Omega\sse\R^n$ be a simply connected bounded open set satisfying property \eqref{propH'}. Let $g\in H^{-1}(\Omega)^n$ be such that $\pd_jg_k=\pd_kg_j$ for all $j,k=1,\dots,n$ (with derivatives in the sense of distributions).
Then there exists $f\in L_2(\Omega)$ such that $\grad f= g$.
\end{theorem}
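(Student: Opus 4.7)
The plan is to chain together two results already established in the excerpt: the distributional Poincar\'e lemma (Theorem~\ref{thm-poincare}) to obtain \emph{some} distributional potential, and the characterisation of property \eqref{propH'} via Lions' lemma (Theorem~\ref{thm-H'-lions}) to upgrade this potential to an $L_2$ function.

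More precisely, I would first regard $g\in H^{-1}(\Omega)^n$ as an element of $\cD'(\Omega)^n$. The hypothesis $\pd_jg_k=\pd_kg_j$ and the fact that $\Omega$ is open and simply connected place $g$ exactly in the setting of Theorem~\ref{thm-poincare}. Applying that theorem yields a distribution $F\in\cD'(\Omega)$ with $\grad F=g$.

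Next, I would observe that $\grad F=g$ belongs to $H^{-1}(\Omega)^n$ by assumption. Since a simply connected open subset of $\R^n$ is in particular connected, the hypothesis of Theorem~\ref{thm-H'-lions} is met: $\Omega$ is a connected bounded open set with property \eqref{propH'}. The `necessity' direction of that theorem asserts that for any distribution whose gradient lies in $H^{-1}(\Omega)^n$ there exists $f\in L_2(\Omega)$ with $F=f$ (as distributions). Applying this to our $F$ gives the required $f\in L_2(\Omega)$, and then $\grad f=\grad F=g$, completing the argument.

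There is no real obstacle once the two theorems are in place; the only point to verify is the mild topological remark that simple connectedness of an open subset of $\R^n$ implies connectedness, so that Theorem~\ref{thm-H'-lions} is applicable. In effect, Theorem~\ref{thm-poincare} produces a potential at the distributional level (which is what the compatibility conditions $\pd_jg_k=\pd_kg_j$ naturally yield) and property \eqref{propH'}, via Theorem~\ref{thm-H'-lions}, is exactly the regularity statement that converts distributional potentials of $H^{-1}$-gradients into genuine $L_2$-potentials.
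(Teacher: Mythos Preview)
Your proof is correct and follows essentially the same route as the paper: apply Theorem~\ref{thm-poincare} to obtain a distributional potential $F$ with $\grad F=g$, then use property~\eqref{propH'} to upgrade to an $L_2$-potential. The only cosmetic difference is that the paper verifies directly that $g\in C^\infty_{\comp,\sigma}(\Omega;\K^n)^\polar$ (via $\dup g\phi=-\dup F{\div\phi}=0$) and then invokes \eqref{propH'} in its raw form, whereas you invoke the equivalent formulation packaged in Theorem~\ref{thm-H'-lions}; the underlying argument is the same.
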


\begin{proof}
Theorem~\ref{thm-poincare} implies that there exists $F\in\cD'(\Omega)$ such that $\grad F=g$. This shows that for all 
$\phi\in C^\infty_{\comp,\sigma}(\Omega;\K^n)$ one obtains
\[
 \dup g\phi_{H^{-1},\Honen} =\dup{\grad F}\phi=-\dup F{\div\phi} =0,
\]
i.e.\ $g\in C^\infty_{\comp,\sigma}(\Omega;\K^n)^\polar$. Now property \eqref{propH'} implies that there exists $f\in L_2(\Omega)$ such that $\grad f=g$.
\end{proof}

Recall from Corollary~\ref{cor-H'localised} that Theorem~\ref{thm-wvPlemma} applies to the case that $\Omega$ is a simply connected bounded open set with Lipschitz boundary. 
For this case, proofs of Theorem~\ref{thm-wvPlemma} are contained in \cite[proof of Theorem~2.1]{Kesavan2005}, \cite[Proof of Theorem~4.1]{AmroucheCiarletMardare2015}, without the use of
our Theorem~\ref{thm-poincare}.
\bigskip

\textbf{Acknowledgement.}
The author is grateful to M.\ Kunzinger (Vienna) for pointing out reference \cite{Marsden1968}.

{\frenchspacing

}
\bigskip

\noindent
J\"urgen Voigt\\
Technische Universit\"at Dresden\\
Fakult\"at Mathematik\\
01062 Dresden, Germany\\
{\tt 
juer\rlap{\textcolor{white}{xxxxx}}gen.vo\rlap{\textcolor{white}{yyyyyyyyyy}}%
igt@tu-dr\rlap{\textcolor{white}{%
zzzzzzzzz}}esden.de}

\end{document}